\documentclass[12pt,reqno]{amsart}
\usepackage{amssymb}
\usepackage{amsmath, mathtools}

\usepackage{amsthm}

\usepackage{amscd}

\newcommand{\RNum}[1]{\uppercase\expandafter{\romannumeral #1\relax}}

\usepackage{caption}

\usepackage[T2A]{fontenc}
\usepackage[utf8]{inputenc}
\usepackage[english]{babel}

\input{int.def} 

\usepackage[sort]{cite}
\usepackage{tikz-cd}
\usetikzlibrary{cd}
\usepackage{dirtytalk}
\usepackage[linktoc=page, colorlinks, linkcolor=blue, citecolor=blue]{hyperref}

\usepackage{xcolor}
\usepackage{centernot}

\usepackage{enumitem}

\usepackage{pgfplots}
\usepackage{multicol}

\pgfplotsset{compat=1.17}

\makeatletter
\renewenvironment{proof}[1][\proofname]{%
  \par\vspace{\topsep}%
  \normalfont\topsep6\p@\@plus6\p@\relax
  \trivlist
  \item[\hskip\labelsep\itshape #1\@addpunct{.}]\ignorespaces
}{%
  \endtrivlist
}
\makeatother

\numberwithin{equation}{section}

\DeclarePairedDelimiterX \ip[2]{\langle}{\rangle}{#1,#2}
\DeclarePairedDelimiterXPP \Prob[1]{\mathbb{P}}\{\}{}{ #1} 
\DeclarePairedDelimiterXPP \Probevent[1]{\mathbb{P}}(){}{#1} 

\def \R {\mathbb{R}}

\usepackage[mathcal]{euscript}

\usepackage{titlesec}
\titleformat{\section}[runin]{\bfseries}{\thesection.}{3pt}{}[.]

\usepackage{geometry}
\newgeometry{vmargin={25mm}, hmargin={22mm,22mm}, footskip=10mm}   

\begin{document}
\title[
First integrals of dense 
hard-ball gases
]{First integrals of dense 
hard-ball gases}

\author{Gleb Smirnov}
\address{
Mathematical Sciences Institute, 
Australian National University, Canberra, Australia}
\email{gleb.smirnov@anu.edu.au}


\begin{abstract}
We answer two questions of Kozlov about polynomial-in-momentum first integrals of billiard systems. For billiards in bounded polytopes, such integrals are independent of position. For hard-ball gases in a rectangular box, every polynomial-in-momentum first integral is a polynomial in the total kinetic energy whenever the collision graph is connected. 
\end{abstract}

\maketitle
\setcounter{section}{0}

\section{Main results}\label{intro} 
A Birkhoff billiard describes a point particle moving inside a domain
\(K\subset\R^d\). The particle moves with constant velocity and reflects
elastically from the boundary; thus, between collisions,
\[
\dot q=p,\qquad \dot p=0,
\]
while at a regular boundary point with unit normal \(n\),
\[
p\longmapsto p-2\langle p,n\rangle n.
\]
A first integral is a function constant along the billiard trajectories. We study \emph{polynomial-in-momentum} first integrals:
\[
f(q,p)=\sum_{|\alpha|\le r}a_\alpha(q)p^\alpha,
\]
with no regularity assumption on the coefficients \(a_\alpha\).  
\smallskip%

The basic example of such a 
first integral is the kinetic energy
\[
H(p)=\frac12\|p\|^2.
\]
Our first result concerns billiards in polytopes.
\begin{theorem}\label{thm:birkhoff}
Let \(d\ge2\). Let \(K\subset\R^d\) be a bounded, not necessarily convex,
polytope with nonempty interior. Every polynomial-in-momentum first integral
of the Birkhoff billiard in \(K\) is independent of the position:
\[
f(q,p)=Q(p).
\]
Consequently, if \(G_K\subset O(d)\) is the closed group generated by the
linear reflections in the facets of \(K\), then \(Q\) is \(G_K\)-invariant.
In particular, if \(G_K=O(d)\), then \(f\) is a polynomial in the kinetic
energy.
\end{theorem}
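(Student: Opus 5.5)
The plan has three stages: reduce to momentum-homogeneous integrals, show that locally such an integral is a polynomial in \(q\) of a very rigid (Killing-tensor) form, and then use the reflections in the facets together with the boundedness of \(K\) to remove the \(q\)-dependence. Only the last stage uses the boundary. Without it there are genuine position-dependent integrals: the angular momentum \(q_1p_2-q_2p_1\) is constant along straight lines. So the local analysis can only reduce \(f\) to a finite-dimensional family, and the facets must finish the job.

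\emph{Step 1 (homogeneous reduction).} The billiard trajectory starting at \((q,\lambda p)\), \(\lambda>0\), is the trajectory from \((q,p)\) run at speed \(\lambda\), with the same reflection points and the same directions. Hence \(f(\gamma(s),\lambda\dot\gamma(s))=\sum_j \lambda^j f_j(\gamma(s),\dot\gamma(s))\) is independent of \(s\) for every \(\lambda\), where \(f_j(q,p)=\sum_{|\alpha|=j}a_\alpha(q)p^\alpha\). Comparing coefficients in \(\lambda\), each homogeneous part \(f_j\) is itself a first integral, so we may assume \(f=F\) is homogeneous of degree \(k\) in \(p\). From now on \(F(q,p)=F(q+tp,p)\) whenever the segment \([q,q+tp]\) lies in \(K\), and \(F(q,p)=F(q,\sigma_H p)\) at interior points \(q\) of a facet lying in the hyperplane \(H\), where \(\sigma_H\) is the linear reflection in the direction of \(H\).

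\emph{Step 2 (local rigidity; the main obstacle).} Let \(B\subset\operatorname{int}K\) be an open ball. The claim is that on \(B\) the function \(F\) coincides with a polynomial in \((q,p)\) of degree \(\le k\) in \(q\); in fact \(F\) is a polynomial in \(p\) and the angular momenta \(q_ip_j-q_jp_i\). The difficulty is that the coefficients \(a_\alpha\) are arbitrary, so no differentiation is allowed and the argument must be purely algebraic.

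The route I would try first works on the space of lines. Homogeneity gives \(F(x,x-y)=F(y,x-y)\) for all \(x,y\in B\). Fix a \(2\)-plane and \(k+1\) pencils of parallel lines in distinct directions \(v_0,\dots,v_k\). A binary form of degree \(k\) restricted to that plane is determined by its values at these \(k+1\) directions (Vandermonde). So on the plane, \(F(z,\cdot)\) is determined by the \(k+1\) functions \(z\mapsto F(z,v_i)\), and each of these is constant along lines of direction \(v_i\).

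Now use a direction \(v=v_0+sv_1\). Write both sides of \(F(z+tv,v)=F(z,v)\) through the interpolation formula. The coefficients are polynomials in \(s\), and matching them produces functional equations of the form: a function constant along one pencil equals a fixed combination of functions constant along other pencils. These equations should force each \(z\mapsto F(z,v_i)\) to be a polynomial of degree \(\le k\) along transversal lines. That is a Cauchy-type equation without regularity, which is solved by finite differences: \(k+1\) pencils give vanishing \((k+1)\)-st differences. Doing this in all coordinate \(2\)-planes, and then for all planes, gives the polynomial form. The step I expect to be the real work is this finite-difference, Cauchy-equation argument.

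\emph{Step 3 (globalization and the facets).} Overlapping balls along interior segments, together with the connectedness of \(\operatorname{int}K\), show that the local polynomials all agree, so \(F=P\) on \(\operatorname{int}K\) for a single polynomial \(P(q,p)\).

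Next, unfold a trajectory across a facet lying in \(H\), with affine reflection \(R_H\). On an open set of \((q,p)\) the straight continuation beyond \(H\), reflected back by \(R_H\), is the billiard trajectory. This gives the polynomial identity \(P(R_Hq,\sigma_Hp)=P(q,p)\) on an open set, hence everywhere. So \(P\) is invariant under the group \(\Gamma\) of affine isometries generated by all \(R_H\), acting by \((q,p)\mapsto(gq,Ag p)\), where \(Ag\) denotes the linear part of \(g\).

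Because \(K\) is bounded with nonempty interior, the facet hyperplanes have no common point, since otherwise \(K\) would be a cone. Products of these reflections then yield isometries whose translation parts span \(\R^d\). I would get them from commutators \(ghg^{-1}h^{-1}\), or from powers \(g^m\) when the linear part has finite order. For each such translation \(q\mapsto q+\tau\), the invariance \(P(q+\tau,p)=P(q,p)\) of a polynomial forces \(P\) to be independent of \(q\) in the direction \(\tau\). When only almost-translations are available (a dense linear part), I would instead use that \(P\) is built from \(p\) and \(q\wedge p\), and that \(\Gamma\) moves the centre of every rotation, to kill all terms involving \(q\wedge p\).

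Thus \(f=Q(p)\). Invariance under the linear parts \(\sigma_H\), and by continuity under their closure \(G_K\), is then the identity \(F(q,p)=F(q,\sigma_Hp)\) at the facets. If \(G_K=O(d)\), then \(Q\) is an \(O(d)\)-invariant polynomial, hence a polynomial in \(\|p\|^2\), that is, in the kinetic energy.
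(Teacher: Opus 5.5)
Your homogeneous reduction, the globalization over overlapping balls, and the unfolding identity \(P(R_Hq,\sigma_Hp)=P(q,p)\) are correct. The last is the paper's Lemma~\ref{lem:reflection}, with your \(R_H,\sigma_H\) in the roles of the paper's \(\sigma_H,R_H\). Step~2 as written is only a plan (``these equations should force\dots''), but the identity you already wrote closes it at once. Set \(G(x,y):=F(x,x-y)=F(y,x-y)\). By the first expression, \(G\) is a polynomial of degree \(\le k\) in \(y\) for each fixed \(x\); by the second, it is a polynomial of degree \(\le k\) in \(x\) for each fixed \(y\). Interpolating in \(y\) at finitely many nodes makes the coefficients polynomial in \(x\), so \(G\) is a polynomial, and hence so is \(F\) on \(B\times\R^d\). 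This is the paper's Lemma~\ref{lem:kozlov}; you need no pencils, finite differences or Killing-tensor normal form.

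The genuine gap is the last step, removing the dependence on \(q\).

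\emph{Why your mechanism fails.} You need pure translations in \(\Gamma\), and nothing guarantees they exist. A commutator \(ghg^{-1}h^{-1}\) has linear part \(A_gA_hA_g^{-1}A_h^{-1}\). This is trivial for two rotations of the plane, but not in general when \(d\ge3\). Powers \(g^m\) help only when \(G_K\) is finite. Now suppose the linear reflections \(\sigma_H\) satisfy no relations other than \(\sigma_H^2=1\), as happens for a generic tetrahedron in \(\R^3\). Then \(\Gamma\) and its linear part are both quotients of a free product of copies of \(\mathbb{Z}_2\), and the composite map is injective. So the linear-part homomorphism \(\Gamma\to O(d)\) is injective, and \(\Gamma\) contains no nontrivial translation at all. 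Your fallback for this case (``\(\Gamma\) moves the centre of every rotation, so kill the \(q\wedge p\) terms'') is not an argument. Invariance does pass to the closure \(\overline{\Gamma}\) by continuity, but you would still have to prove that \(\overline{\Gamma}\) contains translations in every direction. As it stands, your outline proves the theorem only for \(d=2\) or finite \(G_K\).

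\emph{The missing idea} is to use boundedness of \(K\) directly, without translations. The paper first cuts \(K\) along its facet hyperplanes into convex cells. By the global reflection symmetries, the polynomial is still a first integral on each cell. For convex \(K\) and arbitrary \(q\), it then repeatedly reflects in some hyperplane \(\{\ell_j=0\}\) with \(\ell_j(q_k)>0\) (the Motzkin--Schoenberg walk). The iterates are Fej\'er monotone with respect to \(K\) and converge to a point of \(K\), while the momenta stay on the sphere of radius \(\|p\|\). Invariance gives \(|P(q,p)|\le\sup_{x\in K,\,\|v\|=\|p\|}|P(x,v)|\) for every \(q\in\R^d\). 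So \(q\mapsto P(q,p)\) is a bounded polynomial, hence constant.
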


Let us make the class of 
polytopes explicit. If \(K\) is convex, 
then:
\[
K=\{x\in\R^d:\ell_j(x)\le0,\ 1\le j\le m\},
\]
for irredundant affine linear functions \(\ell_1,\ldots,\ell_m\), and the
facets are
\[
F_j=K\cap\{\ell_j=0\}.
\]
More generally, by a non-convex polytope we mean a finite union of convex polytopes whose interior is connected.
\smallskip%

Our second result concerns the 
Boltzmann-Gibbs gas. Consider \(N\) identical
balls of radius \(\rho>0\) and unit mass moving in the rectangular box
\[
B=\prod_{k=1}^d[0,L_k],\qquad L_k>2\rho.
\]
The balls move freely and collide elastically with one another and with the
walls. Their centers lie in
\[
D=\prod_{k=1}^d(\rho,L_k-\rho),
\]
so the collision-free configuration space is
\[
\Omega=
\left\{
(q_1,\ldots,q_N)\in D^N:
\|q_i-q_j\|>2\rho\ \text{for }i\ne j
\right\}.
\]
The space \(\Omega\) need not 
be connected. So fix a connected component
\(\Omega_\gamma\). Its \emph{collision graph} \(\Gamma_\gamma\) has vertices
\(1,\ldots,N\). We join \(i\) and \(j\) if the balls can be continuously moved, while remaining in \(\Omega_\gamma\), to a configuration in which balls \(i\) and \(j\)
collide and no other collision occurs.
\smallskip%

Write:
\[
p=(p_1,\ldots,p_N),\qquad p_i\in\R^d,
\]
for the momenta of the balls. A polynomial-in-momentum first integral has the form:
\[
f(q,p)=\sum_{|\alpha|\le r}a_\alpha(q)p^\alpha.
\]
The basic example is the total
kinetic energy
\[
H(p)=\frac12\sum_{i=1}^N\|p_i\|^2.
\]
\begin{theorem}\label{thm:gas}
Let \(N\ge2\) and \(d\ge2\). If the collision graph
\(\Gamma_\gamma\) is connected, then every polynomial-in-momentum first
integral of the Boltzmann-Gibbs gas on \(\Omega_\gamma\) is a polynomial
in the total kinetic energy.
\end{theorem}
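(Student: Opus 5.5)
Theorem~\ref{thm:gas} will follow from Theorem~\ref{thm:birkhoff} and an invariance computation. The gas on \(\Omega_\gamma\) is itself a billiard in the domain \(\Omega_\gamma\subset\R^{dN}\), with unit masses so the kinetic metric is Euclidean. Its boundary has two kinds of pieces. The wall pieces \(\{(q_i)_k=\rho\}\) and \(\{(q_i)_k=L_k-\rho\}\) are flat. The collision pieces \(\{\|q_i-q_j\|=2\rho\}\) are cylinders, whose normal at a point is \(n_{ij}=(e_i-e_j)\otimes u/\sqrt2\) with \(u=(q_i-q_j)/2\rho\). Reflection in \(n_{ij}\) is the elastic exchange of the \(u\)-components of \(p_i\) and \(p_j\).

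\textbf{Step 1: independence of position.} I will show \(f(q,p)=Q(p)\) by the local mechanism behind Theorem~\ref{thm:birkhoff}, since \(\Omega_\gamma\) is not a polytope. Away from the boundary, \(f\) is constant along straight lines in direction \(p\), so each coefficient of \(f\) is constant along lines. For each fixed \(p\) this gives \(f(q+tp,p)=f(q,p)\) as long as the segment stays in the interior. I will then use the flat wall faces, reached by free flights of single balls, to propagate these identities. Take a small open set of configurations with all balls far apart. Sending ball \(i\) into the \(k\)-th walls, as in the rectangle case of Theorem~\ref{thm:birkhoff}, forces invariance under the reflections \((p_i)_k\mapsto-(p_i)_k\). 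The argument of Theorem~\ref{thm:birkhoff}, which compares straight-line invariance with reflected trajectories over an open set of \(p\) and uses polynomiality in \(p\), then forces the coefficients \(a_\alpha\) to be locally constant. Because \(\Omega_\gamma\) is connected, I expect \(f=Q(p)\) almost everywhere. Since no regularity is assumed, I will work with values along trajectories for Lebesgue-almost every initial condition, as presumably done in the proof of Theorem~\ref{thm:birkhoff}.

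\textbf{Step 2: invariance group.} \(Q\) is invariant under the coordinate sign changes of each \(p_i\) and under every collision reflection \(R_{ij,u}\). The set of attainable \(u\) is open in \(S^{d-1}\), since the collision point can be perturbed, and this holds whenever \(ij\) is an edge of \(\Gamma_\gamma\).
\begin{itemize}
\item In \(\R^{2d}\), the reflections \(R_{ij,u}\) for \(u\) in an open set generate a closed group. Together with the sign changes, this group contains \(O(d)\) acting diagonally on \(p_i,p_j\), and also the swap of \(p_i\) and \(p_j\) (the product over an orthonormal basis of \(u\)'s).
\item It also contains the transformations mixing \(p_i\) and \(p_j\). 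By Zariski-density considerations, \(Q\) restricted to the \((i,j)\) block is invariant under \(O(2d)\) acting on \((p_i,p_j)\) with the other momenta fixed. This holds because the Lie algebra generated by the tangent directions of the family \(R_{ij,u}\), together with the diagonal \(\mathfrak{so}(d)\), is all of \(\mathfrak{so}(2d)\).
\end{itemize}

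\textbf{Step 3: connectivity.} Since \(\Gamma_\gamma\) is connected, the subgroups \(O(2d)\) on the coordinates of pairs \(\{i,j\}\), taken over the edges, generate \(SO(dN)\). Each \(O(2d)\) contains rotations linking any coordinate of ball \(i\) with any coordinate of ball \(j\), and the brackets of \(\mathfrak{so}\) blocks along the edges of a connected graph span \(\mathfrak{so}(dN)\). An \(SO(dN)\)-invariant polynomial is a polynomial in \(\|p\|^2=2H\).

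\textbf{Main obstacle.} The main obstacle is Step 1, namely transferring the position-independence argument to the curved, nonconvex domain \(\Omega_\gamma\) without any regularity of the coefficients. The Lie-algebra computation in Step 2 is the second delicate point.
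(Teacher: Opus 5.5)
Your Step 1 does not go through, and it fails exactly in the dense regime the theorem is about. Your mechanism starts from configurations with all balls far apart and drives ball \(i\) into both walls \(\{x_k=\rho\}\) and \(\{x_k=L_k-\rho\}\), so that the two reflections compose to a translation of \(q_{i,k}\). This requires every ball to reach every wall while staying in \(\Omega_\gamma\). In a dense gas that is false. In the narrow channel of the introduction, balls \(2,\ldots,7\) reach neither end wall, balls \(1\) and \(8\) reach only one each, and \(\Omega_\gamma\) need not contain any configuration with the balls far apart. For such a ball you get at most one reflection in the direction \(x_1\), never the pair whose composition is \(q_{i,1}\mapsto q_{i,1}+2(L_1-2\rho)\). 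So nothing in your argument rules out dependence on \(q_{i,1}\). Appealing to Theorem~\ref{thm:birkhoff} does not help: \(\Omega_\gamma\) has curved boundary pieces, and that proof uses global reflection symmetries in every facet hyperplane together with the Motzkin--Schoenberg walk. Regularity is also not handled by almost-everywhere arguments. Lemma~\ref{lem:kozlov} makes \(f\) a global polynomial on \(\R^{dN}\times\R^{dN}\) with free flight for all \(t\), and this is what allows symmetries to be used at configurations that are not physically accessible. As written, Step 1 is an expectation, not a proof.

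The missing idea is the exchange symmetry \(f(\tau_{ij}q,\tau_{ij}p)=f(q,p)\) for every edge \(ij\), acting on positions as well as momenta (Lemma~\ref{lem:exchange}). The paper first extends the collision law polynomially to the whole cylinder \(\|q_i-q_j\|=2\rho\). It then runs a formal disc billiard in \(x=(q_i-q_j)/\sqrt2\), with the remaining coordinates in free flight. Density of periodic orbits gives invariance under \(y\mapsto y+sw\), and orbits taking \((x,v)\) to \((-x,-v)\) then give \(f(x,y,v,w)=f(-x,y,-v,w)\). Every wall is reached by some ball. Conjugating that ball's wall symmetry by these exchanges transfers it along the edges of \(\Gamma_\gamma\) to every ball. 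Hence every ball acquires both reflections in every direction, and a polynomial with a nonzero period is constant.

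Your Steps 2--3 are a reasonable substitute for the final appeal to Kozlov once \(f=Q(p)\) is known. However, they also assume sign changes for every ball. This is repairable: the momentum swap \(p_i\leftrightarrow p_j\) you obtain makes \(Q\) permutation-invariant, which propagates the sign changes. Note also that the reflections \(R_{ij,u}\) directly give \(O(d)\) acting on \(p_i-p_j\) with \(p_i+p_j\) fixed, not the diagonal \(O(d)\). The \(O(2d)\) claim has to be derived from that group together with the sign changes.
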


The connectedness assumption is 
necessary. If \(C_1,C_2,\ldots\) are the connected components of
\(\Gamma_\gamma\), then each
\[
H_{C_s}(p)
=
\frac12\sum_{i\in C_s}\|p_i\|^2
\]
is also a first integral.
\smallskip%

We now discuss the motivation and related work.
\smallskip%

The physical problem behind the 
Boltzmann-Gibbs gas is considerably deeper
than Theorem~\ref{thm:gas}. One would like to understand the ergodic and
statistical behavior of a gas of elastically colliding particles. The
Boltzmann-Gibbs gas (aka the hard-ball system) is itself a billiard:
the positions of all \(N\) balls form one point in the \(dN\)-dimensional
configuration space, and collisions become reflections at its boundary.
\smallskip%

The ergodic theory of billiards goes back to the foundational work of Sinai
\cite{Sinai}. Much of the subsequent theory of hard-ball systems was
developed on a flat torus, that is, with periodic boundary conditions; see,
for example, \cite{Sinai-Chernov,Simanyi-2,Simanyi-3, Simanyi-4} and the references
therein. In a rectangular box the walls also destroy conservation of total
momentum. For this model, Sim\'anyi proved ergodicity for two balls
\cite{Simanyi-1}. For 
surveys of the statistical and physical aspects of hard-ball systems, see
\cite{Szasz,ChernovYoung,Liverani}.
\smallskip%

An additional first integral is an obstruction to ergodicity: if it is
functionally independent of the energy, its level sets give additional
invariant subsets of an energy 
surface. Absence of additional first integrals
is much weaker than ergodicity, but is more accessible algebraically. Kozlov \cite{Kozlov} developed this viewpoint for the Boltzmann-Gibbs gas. He pointed out a possible thermodynamic application: if sufficiently
regular stationary densities depend only on conserved quantities, then the
absence of additional conservation laws forces them to depend only on the
energy. This suggests a route from non-integrability to equilibrium
statistical mechanics. We do not pursue this question here, but it provides an additional motivation for studying the absence of first integrals.
\smallskip%

We restrict attention to integrals polynomial in the momenta. Many classical
integrable systems have additional integrals of this form. The basic billiard
example is the ellipse, which has an additional quadratic integral. Its
integrability is closely related to Poncelet's porism and the
Arnold-Liouville theorem; see, e.g.,
\cite{Levi-Tabachnikov,Izo}. The problem of characterizing integrable convex billiards goes back to
Birkhoff and remains active; see, e.g.,
\cite{Mather,Treschev,Avila-DSim-Kalosh,Kalosh-Sorr,Bialy-Mir},
and \cite{Tabachnikov} for an introduction.
\smallskip%

Our first theorem concerns a nonsmooth class of billiards. For a rectangular
box, polynomial first integrals may depend on
the momenta, but not on the position. In
\cite[\S9, item \(6^\circ\)]{Kozlov}, Kozlov posed the problem of extending
this position-independence statement from rectangular boxes to arbitrary
polytopes. He observed that solving this problem would extend his
multidimensional Lorentz-gas result from rectangular boxes to polyhedral
containers. Theorem~\ref{thm:birkhoff} solves this problem.
\begin{figure}[ht]
\centering
\begin{tikzpicture}[scale=0.85]


\draw[thick] (0,0) rectangle (10.8,2);

\foreach \x/\lab in {
0.7/1,
2.04/2,
3.38/3,
4.72/4,
6.06/5,
7.40/6,
8.74/7,
10.08/8}
{
  \draw[thick,fill=gray!15] (\x,1) circle (0.55);
  \node at (\x,1) {\(\lab\)};
}

\draw[<->] (-0.35,0) -- (-0.35,2);
\node[left] at (-0.35,1) {\(h<4\rho\)};

\node at (5.4,-0.45)
{\small the balls cannot pass one another};

\begin{scope}[yshift=-2cm]

\node at (5.4,0.75) {\small collision graph};

\foreach \x/\lab in {
0.7/1,
2.04/2,
3.38/3,
4.72/4,
6.06/5,
7.40/6,
8.74/7,
10.08/8}
{
  \node[circle,draw,inner sep=1.4pt] (v\lab) at (\x,0) {\(\lab\)};
}

\draw
(v1)--(v2)--(v3)--(v4)--(v5)--(v6)--(v7)--(v8);

\end{scope}

\end{tikzpicture}

\caption*{
A dense hard-ball gas in a narrow channel, with \(2\rho<h<4\rho\).
The balls cannot pass one another, so the collision-free configuration
space is disconnected. Nevertheless, within each ordering component,
neighboring balls can collide and the collision graph is connected.
}
\label{fig:dense-gas}
\end{figure}

Our second theorem is more directly motivated by the dynamics of a gas. Kozlov proved in Theorem~11 of \cite{Kozlov} that for a \emph{thin} Boltzmann-Gibbs gas, when the whole configuration space \(\Omega\) is connected, every polynomial-in-momentum first integral is a polynomial in the total kinetic
energy. In \cite[\S9, item \(1^\circ\)]{Kozlov}, he asked whether the same
conclusion remains true for dense 
gases. Theorem~\ref{thm:gas} confirms this under the weaker assumption: the collision graph is connected. This substantially extends the range of physical configurations covered by
the result. For example, the theorem applies to balls in a narrow channel which cannot pass one another. 

\section{Polynomial continuation lemma}
To begin with, consider the 
free-motion part of the billiard dynamics:
\[
\dot q=p,\qquad \dot p=0.
\]
Starting from \((q,p)\), the trajectory is 
\((q(t),p(t))=(q+tp,p)\). 
For a billiard, this is valid until the trajectory reaches the boundary.
Thus, if \(U\subset\R^d\) is the region of free motion, every first
integral satisfies:
\begin{equation}\label{eq:free-flight}
f(q+tp,p)=f(q,p)
\end{equation}
whenever the segment from \(q\) to \(q+tp\) is contained in \(U\). We call \eqref{eq:free-flight} the \emph{free-flight equation}.
\smallskip%

The following continuation lemma is due to Kozlov \cite{Kozlov} under
the assumption that the coefficients of the polynomial in
the momenta are \(C^1\). We shall not assume any regularity in \(q\).

\begin{lemma}[Polynomial continuation]\label{lem:kozlov}
Let \(U\subset\R^d\) be an 
open subset. Let
\(f \colon U\times\R^d\to\R\)
be polynomial in \(p\), with arbitrary dependence on \(q\). Assume
\[
f(q+tp,p)=f(q,p)
\]
whenever the segment from \(q\) to \(q+tp\) is contained in \(U\). Then, on every connected component of \(U\), \(f\) is a polynomial in \((q,p)\).
\end{lemma}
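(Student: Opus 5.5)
The plan is to reduce to a statement about \emph{fixed} base points, where the free-flight equation turns the unknown function into an honest polynomial. Then we use interpolation in \(p\) to recover the coefficients \(a_\alpha(q)\) algebraically.

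\textbf{Step 1 (homogeneous parts).} Write \(f=\sum_{k\le r} f_k\), with \(f_k(q,p)=\sum_{|\alpha|=k}a_\alpha(q)p^\alpha\) homogeneous of degree \(k\) in \(p\). The free-flight hypothesis depends only on the segment \([q,q+tp]\). Hence, for fixed \(q\), \(u\) and \(p\) with \([q,q+up]\subset U\), replacing \((t,p)\) by \((u/\lambda,\lambda p)\) gives
\[
f(q+up,\lambda p)=f(q,\lambda p)\qquad\text{for all }\lambda>0 .
\]
Both sides are polynomials in \(\lambda\), so comparing coefficients yields \(f_k(q+up,p)=f_k(q,p)\) for each \(k\). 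It therefore suffices to treat one homogeneous \(f_k\).

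\textbf{Step 2 (fixed base points give polynomials).} Work first in a small open ball \(V\subset U\); it is convex, so every segment between points of \(V\) lies in \(U\). For \(w,z\in V\), take \(p=w-z\) and \(t=1\) from base point \(z\). Free flight and homogeneity give
\[
f_k(w,\,w-z)=f_k(z,\,w-z).
\]
Now fix finitely many points \(z_1,\dots,z_M\in V\), where \(M\) is the number of monomials of degree \(k\) in \(d\) variables. For each \(j\) the right-hand side \(H_j(w):=f_k(z_j,w-z_j)\) is a polynomial in \(w\), because \(z_j\) is frozen. Thus we know the values of the degree-\(k\) polynomial \(p\mapsto f_k(w,p)\) at the \(M\) points \(p=w-z_j\), and these values are polynomial in \(w\).

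\textbf{Step 3 (interpolation).} Let \(\mathcal V(w)\) be the \(M\times M\) matrix of monomials \((w-z_j)^\alpha\). Whenever \(\det\mathcal V(w)\ne0\), the coefficient vector satisfies \((a_\alpha(w))_\alpha=\mathcal V(w)^{-1}(H_j(w))_j\), so each \(a_\alpha\) agrees with a rational function there. The main obstacle is to ensure that this determinant is nonzero near any prescribed point \(w_0\in V\). For that we choose the \(z_j\) depending on \(w_0\): the directions \(w_0-z_j\) must be unisolvent for degree-\(k\) forms, which is a generic (open dense) condition on \((z_1,\dots,z_M)\). By openness \(\det\mathcal V(w)\ne0\) persists for \(w\) near \(w_0\). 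Hence every point of \(V\) has a neighborhood on which each \(a_\alpha\) is real-analytic, in particular \(C^1\).

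\textbf{Step 4 (conclusion).} With \(C^1\) coefficients, Kozlov's version of Lemma~\ref{lem:kozlov} applies and shows that \(f_k\) is a polynomial in \((q,p)\) on each small ball. Alternatively, one can argue directly: with \(C^1\) coefficients one gets \(\langle p,\nabla_q f_k\rangle=0\), and iterating on the now-analytic coefficients gives the same conclusion. Finally, suppose two overlapping balls carry polynomials. These polynomials agree on an open set, so they coincide, and chaining balls along paths shows that on each connected component of \(U\) the function \(f\) equals a single polynomial in \((q,p)\).
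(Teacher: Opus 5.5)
Your proof is correct, but it reaches polynomiality by a detour that the paper avoids. Both arguments use the same identity \(f(w,w-z)=f(z,w-z)\) for \(w,z\) in a small ball.

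The paper reads this identity in endpoint coordinates. The function \(g(x,y)=f(x,y-x)\) is polynomial of degree at most \(r\) in \(y\) for fixed \(x\), and, by the identity, also in \(x\) for fixed \(y\). Interpolating in \(y\) at \emph{fixed} nodes \(y_1,\dots,y_M\in B\) gives \(g(x,y)=\sum_j g(x,y_j)L_j(y)\), where every \(x\mapsto g(x,y_j)\) is polynomial. Hence \(g\), and therefore \(f(q,p)=g(q,q+p)\), is polynomial at once.

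You interpolate instead in the momentum variable at the base point \(w\), with nodes \(w-z_j\) that move with \(w\). The matrix \(\mathcal V(w)\) then depends on \(w\), and inverting it only shows that the coefficients are locally rational, hence analytic. That is a useful regularity statement, but it forces you to finish by citing Kozlov's \(C^1\) version of the lemma. So your argument is a regularity bootstrap plus a citation, not a self-contained proof.

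Your ``direct'' alternative in Step 4 is only a sketch as written. To make it rigorous you would need something like the Killing-tensor fact that a solution of \(\langle p,\nabla_q F\rangle=0\) that is bihomogeneous of degree \(m\) in \(q-q_0\) and \(k\) in \(p\) must vanish when \(m>k\), applied term by term to the Taylor expansion of the analytic coefficients.

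The homogeneous splitting in Step 1 is harmless but unnecessary. Your chaining argument from small balls to connected components is correct and spells out a step the paper leaves implicit.
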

\begin{proof}
Let \(B\Subset U\) be an open 
ball. For \(x,y\in B\), set:
\[
g(x,y)=f(x,y-x).
\]
Since the segment from \(x\) to \(y\) lies in \(B\), the free-flight
equation gives:
\[
g(x,y)=f(y,y-x).
\]
Let \(r=\mathrm{deg}_p f\). For every fixed \(x\), the function 
\(y \to g(x,y)=f(x,y-x)\) 
is a polynomial of degree at most \(r\). On the other hand, using the
identity above, for every 
fixed \(y\), \(x\to g(x,y)=f(y,y-x)\)
is also a polynomial of degree at most \(r\). It follows that \(g\) is jointly polynomial in \((x,y)\). 
The proof now follows by polynomial interpolation. \qed
\end{proof}

\section{Proof of Theorem \ref{thm:birkhoff}} 

Let \(K\subset\R^d\) be a connected polytope, and let \(f\) be a
polynomial-in-momentum first 
integral of the Birkhoff billiard in 
\(K\). By 
Lemma~\ref{lem:kozlov}, \(f\) admits a unique polynomial
extension to the whole space \(\R^{2d}\). Moreover, this extension satisfies the free-flight equation globally:
\[
f(q+tp,p)=f(q,p),
\qquad \forall q, p\in\R^d,\quad t\in\R.
\]
Abusing notation, we use the same letter \(f\) for the original first
integral and for its polynomial  continuation.
\smallskip%

Let \(F\) be a facet of \(K\), and let \(H\) be the affine hyperplane
containing \(F\). Denote by
\[
\sigma_H:\R^d\to\R^d
\]
the affine reflection in \(H\), and by
\[
R_H\in O(d)
\]
its linear part.
\begin{lemma}\label{lem:reflection}
The polynomial \(f\) satisfies:
\[
f(\sigma_H q, R_H p) = f(q,p),
\qquad q,p\in\R^d.
\]
\end{lemma}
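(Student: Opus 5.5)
We will prove Lemma~\ref{lem:reflection} by comparing the two sides near a regular interior point of the facet \(F\) and then using the polynomial nature of both sides to extend the identity to all of \(\R^{2d}\).

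\emph{Step 1: an open set of incoming data.} Fix a point \(q_0\) in the relative interior of \(F\) that lies on no other facet. Choose a small ball \(V\) around \(q_0\) such that \(V\cap K\) is the half-ball \(V\cap\{\ell\le 0\}\). Here \(\ell\) is the affine function defining \(H\) with \(K\) on the side \(\ell\le0\), and \(n\) is the outward unit normal. Consider pairs \((q,p)\) with \(q\in V\cap\operatorname{int}K\) and \(\langle p,n\rangle>0\). Shrink to a smaller ball \(V'\subset V\) and bound \(\|p\|\) so that the ray \(q+tp\) hits \(H\) at a point \(x=q+t_0p\) inside \(F\cap V\). Then the segment \([q,x)\) is in \(\operatorname{int}K\), and after reflection the segment \(x+s R_Hp\), \(0<s<\varepsilon\), also lies in \(\operatorname{int}K\). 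Such pairs form a nonempty open set \(W\subset\R^{2d}\).

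\emph{Step 2: the identity on \(W\).} Take \((q,p)\in W\). By the free-flight equation, \(f(q,p)=f(x-\delta p,p)\) for small \(\delta>0\). Because \(f\) is a first integral across the collision, \(f(x-\delta p,p)=f(x+\delta R_Hp, R_Hp)\). This uses invariance under the billiard map at the regular boundary point \(x\): the trajectory passes from \((x-\delta p,p)\) to \((x+\delta R_Hp,R_Hp)\).

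Apply the global free-flight equation for the polynomial continuation with \(t=-\delta-t_0\). This gives \(f(x+\delta R_Hp,R_Hp)=f(x-t_0R_Hp,R_Hp)\). Since \(x\in H\), we have \(\sigma_H x=x\), and therefore
\[
x-t_0R_Hp=\sigma_H(x-t_0p)=\sigma_H q .
\]
Hence \(f(q,p)=f(\sigma_Hq,R_Hp)\) on \(W\).

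\emph{Step 3: extension.} Both \((q,p)\mapsto f(q,p)\) and \((q,p)\mapsto f(\sigma_Hq,R_Hp)\) are polynomials on \(\R^{2d}\), since \(\sigma_H\) is affine. They agree on the nonempty open set \(W\), so they agree everywhere.

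\emph{Main obstacle.} The only delicate point is Step 2's use of the collision law. The notion of first integral must be applied along genuine trajectories that reflect at a regular point. The original \(f\) is only defined on \(K\), with no regularity in \(q\), so we avoid evaluating it on \(H\) itself: we compare only the interior points \(x-\delta p\) and \(x+\delta R_Hp\), which lie on the same trajectory. Choosing \(V'\) small enough guarantees that no other facet or non-convex corner interferes with this single reflection. This holds because \(q_0\) has a positive distance to the union of the other facets.
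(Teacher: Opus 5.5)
Your proof is correct and is essentially the paper's argument: free flight to the wall, reflection, then free flight back for the same time. Your identity $x-t_0R_Hp=\sigma_Hq$ is exactly the paper's $q+tp-tR_Hp=\sigma_Hq$. The only difference is the order of steps: you compare the interior points $x\mp\delta p$ and use polynomiality once at the end on the open set $W$, while the paper first extends $f(x,p)=f(x,R_Hp)$ from the facet to all of $H$ (and $W$ is really cut out by restricting the direction of $p$ to a cone around $n$, not by bounding $\|p\|$), so your ordering has the small advantage of never evaluating $f$ on the boundary itself.
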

\begin{proof}
Let \(F\) be a facet of \(K\), contained in \(H\). Let \(x\) lie in the interior of \(F\), and let 
\(p\) be transverse to \(H\). 
The reflection law gives:
\[
f(x,p)=f(x,R_Hp).
\]
Since \(f\) is polynomial, it follows that:
\[
f(x,p)=f(x,R_Hp),
\qquad \forall x\in H,\quad p\in\R^d.
\]
Suppose again that \(p\) is not parallel to \(H\). For any \(q\), let \(t\) be defined by
\[
q + t p \in H.
\]
Then:
\[
q + tp - t R_H p = \sigma_H q.
\]
Hence:
\[
\begin{aligned}
f(q,p)
&=f(q+tp,p)
&&\text{(free-flight equation)}\\
&=f(q+t p, R_H p)
&&\text{(wall reflection)}\\
&=f(q + tp - t R_H p, R_Hp)
&&\text{(free-flight equation),}
\end{aligned}
\tag{2}\label{eq}
\]
and the proof follows. \qed
\end{proof}

\begin{proof}[Proof of 
Theorem \ref{thm:birkhoff}] 
To begin with, reduce to the case when \(K\) is convex. Let
\(H_1,\ldots,H_m\) be the hyperplanes containing the facets of \(K\). Their arrangement decomposes \(K\) into finitely many convex 
polytopes \(K_1,\ldots,K_s\). Every facet of each \(K_j\) is contained in one of the \(H_i\). By the
previous lemma, \(f\) has the corresponding global reflection symmetry; hence, its restriction to \(K_j\) is a first integral of the billiard in \(K_j\). It is therefore enough to consider convex \(K\).
\smallskip%

Let \(H_1,\ldots,H_m\) be the affine hyperplanes containing the facets
of \(K\), and let \(\sigma_j\) and \(R_j\) be the corresponding affine
and linear reflections. By the previous lemma,
\begin{equation}\label{eq:global-reflection}
f(\sigma_j q,R_jp)=f(q,p),
\qquad j=1,\ldots,m.
\end{equation}
Choose unit normals \(n_j\) and numbers \(c_j\) so that:
\[
K=\{x:\ell_j(x)\le0,\ j=1,\ldots,m\},
\qquad
\ell_j(x)=\langle n_j,x\rangle-c_j,
\]
and
\[
H_j=\{x:\ell_j(x)=0\}.
\]
We use the reflection walk of Motzkin and Schoenberg
\cite{Motzkin-Sch}. Take any \((q,p)\), and set:
\[
q_0=q,\quad p_0=p.
\]
If \(q_k\notin K\), choose any \(j\) such that \(\ell_j(q_k)>0\) and set:
\[
q_{k+1}=\sigma_jq_k,
\quad
p_{k+1}=R_jp_k.
\]
By \eqref{eq:global-reflection}, for each \(k\),
\[
f(q_k,p_k)=f(q,p).
\]
Choose \(x_*\in\operatorname{int}K\), and set:
\[
\delta_j=-\ell_j(x_*)>0.
\]
Observe:
\[
\|q_{k+1}-x_*\|^2 \le \|q_k-x_*\|^2.
\]
Hence, \((q_k)\) is 
bounded. Consequently, there is a subsequence of \(q_k\) 
converging to some \(q_\infty\). In fact, the whole sequence converges to \(q_\infty\). Indeed, our calculation gives:
\[
\|q_{k+1}-x_*\| \le 
\|q_{k}-x_*\|\quad \text{for any 
\(x_* \in K\)}.
\]
It follows that \(\|q_k - x_{*}\|\) 
has a limit for every \(x_{*}\in K\). 
Any two accumulation points of \(q_k\) have the same distance from every
\(x^{*} \in K\). If \(K\) has non-empty interior, they must coincide.
\smallskip%

We claim that 
\(q_\infty\in K\). Choose a 
small enough neighborhood \(V\) of \(q_\infty\) such that:
\[
\sigma_j(V)\cap V=\varnothing
\qquad\text{whenever }q_\infty\notin H_j.
\]
For all large \(k\), both \(q_k\) and \(q_{k+1}\) lie in
\(V\); hence, every wall used from that moment on contains 
\(q_\infty\). Such reflections fix \(q_\infty\); therefore:
\[
\|q_{k+1}-q_\infty\|
=
\|q_k-q_\infty\|.
\]
Since \(q_k\to q_\infty\), 
it follows that \(q_k=q_\infty\) for all large \(k\). If \(q_\infty\notin K\), some inequality is strictly violated at \(q_\infty\), so the 
next reflection
does not fix \(q_\infty\). This is a contradiction. Thus: 
\[
q_{\infty} \in K.
\]
Finally, \(\|p_k\|=\|p\|\). Passing to a subsequence, assume:
\[
p_k\to p_\infty,
\quad
\|p_\infty\|=\|p\|.
\]
Since \(f(q_k,p_k)=f(q,p)\),
\[
f(q,p)=f(q_\infty,p_\infty).
\]
Thus:
\[
|f(q,p)|
\le
\sup_{\substack{x\in K\\ \|v\|=\|p\|}}
|f(x,v)|.
\]
Consequently, the
polynomial \(q\to f(q,p)\)
is bounded on \(\R^d\), and therefore constant. This completes the 
proof. \qed
\end{proof}

\section{Proof of Theorem~\ref{thm:gas}}
To begin with, view the Boltzmann--Gibbs gas as a billiard in its
\(dN\)-dimensional configuration space \(\Omega_\gamma\). Write
\[
q=(q_1,\ldots,q_N)
\]
for the centers of the balls, and
\[
p=(p_1,\ldots,p_N)
\]
for their momenta. Between collisions,
\[
\dot q=p,\qquad \dot p=0.
\]
The boundary of \(\Omega_\gamma\) has two kinds of smooth pieces. A wall
collision of the \(i\)-th ball corresponds to
\[
q_{i,k}=\rho
\qquad\text{or}\qquad
q_{i,k}=L_k-\rho.
\]
If \(n\) is the unit normal to the wall, 
the collision changes the
momenta as follows:
\[
p_i'=p_i-2\langle p_i,n\rangle n,
\]
with all other momenta unchanged. This is precisely reflection of the
\(i\)-th ball from the wall.
\smallskip%

A collision of balls \(i\) and \(j\) corresponds to
\[
\|q_i-q_j\|=2\rho.
\]
Set:
\[
n=\frac{q_i-q_j}{\|q_i-q_j\|}.
\]
A unit normal to this hypersurface in \(\R^{dN}\) is
\[
N_{ij}(n)
=
\frac1{\sqrt2}
(0,\ldots,n,\ldots,-n,\ldots,0),
\]
where \(n\) occupies the \(i\)-th position and \(-n\) 
the \(j\)-th. The collision changes the
momenta as follows:
\[
\begin{aligned}
p_i'&=p_i-\langle p_i-p_j,n\rangle n,\\
p_j'&=p_j+\langle p_i-p_j,n\rangle n,
\end{aligned}
\]
with all other momenta unchanged. This is precisely the elastic collision
law for two equal masses.
\smallskip%

Thus the gas on \(\Omega_\gamma\) is 
a billiard. Since
\(\Omega_\gamma\) is connected, Lemma~\ref{lem:kozlov} applies. 
It follows that every polynomial-in-momentum first integral extends uniquely to a polynomial \(f(q,p)\) on the whole space \(\R^{dN}_q\times\R^{dN}_p\). In particular, the free-flight
equation holds globally:
\[
f(q+tp,p)=f(q,p),
\quad q,p\in\R^{dN},\ t\in\R.
\]
Let \(H\subset\R^d\) be one of the 
hyperplanes
\[
x_k=\rho
\qquad\text{or}\qquad
x_k=L_k-\rho.
\]
Denote by \(\sigma_H:\R^d\to\R^d\) the affine reflection in \(H\), and
by \(R_H\in O(d)\) its linear part. For the \(i\)-th ball, define:
\[
\sigma_H^{(i)}(q_1,\ldots,q_N)
=
(q_1,\ldots,\sigma_H q_i,\ldots,q_N),
\]
and
\[
R_H^{(i)}(p_1,\ldots,p_N)
=
(p_1,\ldots, R_H p_i,\ldots,p_N).
\]
\begin{lemma}[Wall symmetry]\label{lem:ball-wall}
Suppose that the \(i\)-th ball can collide with the wall \(H\). Then:
\[
f(\sigma_H^{(i)}q,R_H^{(i)}p)=f(q,p),
\qquad q,p\in\R^{dN}.
\]
\end{lemma}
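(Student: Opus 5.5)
We follow the proof of Lemma~\ref{lem:reflection}, now in the \(dN\)-dimensional configuration space. The wall collision of ball \(i\) with \(H\) is a reflection of the billiard in \(\Omega_\gamma\) across the affine hyperplane
\[
\widetilde H=\{q\in\R^{dN}: q_i\in H\},
\]
whose affine reflection is \(\sigma_H^{(i)}\) and whose linear part is \(R_H^{(i)}\). The plan has three steps: derive the reflection identity at a single boundary point, spread it to all of \(\widetilde H\) by polynomiality, and then transport it to every \(q\) using the global free-flight equation.

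\emph{Step 1 (local reflection identity).} By hypothesis, ball \(i\) can collide with \(H\) within \(\Omega_\gamma\). So there is a configuration \(x\in\partial\Omega_\gamma\cap\widetilde H\) in which ball \(i\) touches \(H\) and no other constraint is active: ball \(i\) touches no other wall, and no two balls touch. Being a strict-inequality condition, this persists on a relatively open piece \(W\) of \(\widetilde H\) near \(x\). The one point needing care is that the hypothesis is read this way, as in the definition of the collision graph. If the given collision happens to be simultaneous with another one, we first perturb the positions of the other balls slightly, and ball \(i\) tangentially along \(H\), staying in \(\overline{\Omega_\gamma}\). For \(x\in W\) and any \(p\) with \(\langle p_i,n\rangle\neq0\), where \(n\) is the normal to \(H\), the trajectories arriving at \(x\) and leaving it are genuine billiard orbits. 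Since \(f\) is a first integral, continued by Lemma~\ref{lem:kozlov} as a polynomial that agrees with the original \(f\) on \(\Omega_\gamma\), this gives
\[
f(x,p)=f(x,R_H^{(i)}p).
\]
Strictly, the first integral is evaluated just before and just after the collision, at points \(x\mp\varepsilon p\) in the interior. We pass to \(x\) using the free-flight equation, or continuity of the polynomial.

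\emph{Step 2 (extension to all of \(\widetilde H\)).} Both sides of the identity are polynomials in \((x,p)\). They agree on the set \(W\times\{p:\langle p_i,n\rangle\neq0\}\), which is open in \(\widetilde H\times\R^{dN}\) and hence Zariski dense there. Therefore the identity holds for all \(x\in\widetilde H\) and all \(p\in\R^{dN}\).

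\emph{Step 3 (transport to arbitrary \(q\)).} Fix \(q\), and take \(p\) with \(\langle p_i,n\rangle\neq0\). Choose \(t\) with \(q+tp\in\widetilde H\), that is, \(q_i+tp_i\in H\). One checks directly that
\[
q+tp-tR_H^{(i)}p=\sigma_H^{(i)}q.
\]
Indeed, only the \(i\)-th component changes, and there it is \(\sigma_H q_i\) by the computation already done in Lemma~\ref{lem:reflection}. The same chain of equalities as in \eqref{eq} then applies: free flight, then the reflection at \(\widetilde H\), then free flight again. This gives
\[
f(q,p)=f\bigl(\sigma_H^{(i)}q,R_H^{(i)}p\bigr)
\]
for all \(p\) off a hyperplane. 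By polynomiality the identity holds for all \(p\).

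The main obstacle is Step 1: ensuring that a genuinely regular boundary point of \(\Omega_\gamma\) exists on \(\widetilde H\), together with an open set of such points. Steps 2 and 3 are formal once that is in place.
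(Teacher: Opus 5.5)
Your proposal is correct and follows the paper's proof essentially step for step. The paper also reads the hypothesis as giving a relatively open piece of $\{q_i\in H\}\cap\overline{\Omega}_\gamma$, extends $f(x,p)=f(x,R_H^{(i)}p)$ to the whole hyperplane by polynomiality, and transports it by the same free-flight, reflection, free-flight chain using $q+tp-tR_H^{(i)}p=\sigma_H^{(i)}q$. The only loose point is your aside that a simultaneous collision can always be perturbed away inside $\overline{\Omega_\gamma}$; you do not justify it, but it is not needed under the intended reading of the hypothesis, which is the one the paper uses.
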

\begin{proof}
Let \(\overline{\Omega}_\gamma\) denote 
the closure of \(\Omega_\gamma\) in \(\R^{N d}\). Set:
\[
\mathcal H
=
\{q\in\R^{dN}:q_i\in H\}
\]
Since the \(i\)-th 
ball can collide with \(H\), 
\[
\mathcal H \cap \overline{\Omega}_\gamma
\]
contains a nonempty relatively open subset of \(\mathcal H\). At every point \(x\) of this subset, the reflection law gives:
\[
f(x,p)=f(x,R_H^{(i)}p)\quad 
\text{for every \(p\) 
transverse to \(\mathcal H\).}
\]
Since \(f\) is polynomial, it 
follows that:
\[
f(x,p)=f(x,R_H^{(i)}p),
\quad
x\in\mathcal H,\quad p\in\R^{dN}.
\]
Suppose that \(p_i\) is not parallel to \(H\). For any \(q\in\R^{dN}\),
choose \(t\) so that:
\[
q_i+tp_i\in H.
\]
Then:
\[
q+tp-tR_H^{(i)}p=\sigma_H^{(i)}q.
\]
Hence:
\[
\begin{aligned}
f(q,p)
&=f(q+tp,p)
&&\text{(free-flight equation)}\\
&=f(q+tp,R_H^{(i)}p)
&&\text{(wall reflection)}\\
&=f(q+tp-tR_H^{(i)}p,R_H^{(i)}p)
&&\text{(free-flight equation)}\\
&=f(\sigma_H^{(i)}q,R_H^{(i)}p),
\end{aligned}
\]
and the proof follows. \qed
\end{proof}

\begin{lemma}[Pair symmetry]\label{lem:exchange}
Suppose that \(ij\) is an edge of \(\Gamma_\gamma\). Let
\(\tau_{ij}\) exchange the \(i\)-th and \(j\)-th balls. Then:
\[
f(\tau_{ij}q,\tau_{ij}p)=f(q,p),
\qquad q,p\in\R^{dN}.
\]
\end{lemma}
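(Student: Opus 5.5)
The plan is to imitate the proof of Lemma~\ref{lem:ball-wall}, with one difference: the collision hypersurface $S_{ij}=\{\|q_i-q_j\|=2\rho\}$ is curved, so each collision point gives a reflection in a different tangent hyperplane. For a unit vector $n\in\R^d$ let $R_n$ be the linear reflection of $\R^{dN}$ in $N_{ij}(n)^\perp$, i.e.\ the collision map above. Explicitly,
\[
R_np=p-\langle p_i-p_j,n\rangle\,(0,\ldots,n,\ldots,-n,\ldots,0).
\]
Let $P_n=\{q:\langle q_i-q_j,n\rangle=2\rho\}$, the hyperplane tangent to $S_{ij}$ along $A_n=\{q_i-q_j=2\rho n\}$, and let $\sigma_n$ be the affine reflection of $\R^{dN}$ in $P_n$; its linear part is $R_n$. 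Note that $R_n p$ is polynomial in $n$, and so is $\sigma_n q$.

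\emph{Step 1 (reflection law on all of $S_{ij}$).} Since $ij$ is an edge of $\Gamma_\gamma$, the set $S_{ij}\cap\overline\Omega_\gamma$ contains a nonempty relatively open subset $W$ of $S_{ij}$ consisting of pure $ij$-collisions. At $x\in W$ with $n(x)=(x_i-x_j)/2\rho$, the reflection law gives $f(x,p)=f(x,R_{n(x)}p)$ for all transverse $p$, hence for all $p$ by polynomiality. The function $G(x,p)=f(x,p)-f\bigl(x,R_{(x_i-x_j)/2\rho}\,p\bigr)$ is a polynomial. It vanishes on $W\times\R^{dN}$, and $S_{ij}$ is an irreducible real quadric cylinder of positive dimension when $d\ge2$. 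Therefore $G$ vanishes on $S_{ij}\times\R^{dN}$.

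\emph{Step 2 ($\sigma_n$-symmetry).} As in \eqref{eq}: if the line $q+\R p$ meets $S_{ij}$ at a point $x=q+tp$ with normal $n$, then $\sigma_n q=x-tR_np$. Free flight, the reflection law at $x$, and free flight again give
\[
f(\sigma_nq,R_np)=f(q,p).
\]
The aim is to upgrade this to the identity $F(q,p,n):=f(\sigma_nq,R_np)-f(q,p)=0$ for all $(q,p)$ and all unit $n$. $F$ is polynomial in $(q,p,n)$, so it suffices to show that it vanishes on a Zariski-dense subset of $\R^{2dN}\times S^{d-1}$. This is the step I expect to be the main obstacle. The naive incidence set $\{(q,p,n): q+tp\in A_n\}$ has dimension $2dN$, one less than $2dN+d-1$ when $d\ge2$, so something extra is needed. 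What I would try first is to combine the identity on all of $A_n$ (from Step 1) with free flight in all directions $p$, including directions for which $R_np=p$. I would also exploit that $G\equiv0$ on the whole quadric, so that the point of contact can be slid within $A_n$. The goal is to show that $f(\cdot,p)-f(\cdot,R_np)$ vanishes on all of $P_n$, not just on $A_n$. Once that holds, the argument of Lemma~\ref{lem:ball-wall}, applied verbatim to the hyperplane $P_n$, yields the $\sigma_n$-symmetry for every $n$.

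\emph{Step 3 (removing the translation and assembling $\tau_{ij}$).} Composing $\sigma_{-n}\circ\sigma_n$ gives a translation $q\mapsto q+4\rho\,(0,\ldots,n,\ldots,-n,\ldots,0)$ with trivial linear part. So $f(\cdot,p)$ is invariant under all its integer multiples, and a polynomial invariant under a nontrivial translation is constant along that direction. Hence $f$ is invariant under translation by $(0,\ldots,v,\ldots,-v,\ldots,0)$ for every $v\in\R^d$. Modulo these translations, $\sigma_n$ acts on $q$ as the linear map $L_n$ that exchanges the $n$-components of $q_i$ and $q_j$. Therefore $f(L_nq,R_np)=f(q,p)$, where $R_n$ exchanges the $n$-components of $p_i$ and $p_j$. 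Composing over an orthonormal basis $n=e_1,\ldots,e_d$ of $\R^d$ gives exactly $\tau_{ij}$ on both $q$ and $p$, proving the lemma.
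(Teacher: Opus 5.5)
Your Step~1 coincides with the first step of the paper, and Step~3 follows correctly \emph{from} Step~2. The gap is Step~2, and it is worse than unproven. Your plan combines only global free flight and the collision law on all of $S_{ij}$, and from those ingredients the $\sigma_n$-symmetry is false.

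Write $u=q_i-q_j$, $z=p_i-p_j$, and take the relative angular momentum $M(q,p)=u_1z_2-u_2z_1$.
\begin{itemize}
\item It satisfies $M(q+tp,p)=M(q,p)$ identically.
\item It satisfies the reflection law on the whole cylinder: a collision replaces $z$ by $z-2\langle z,n\rangle n$, while $u=2\rho n$ is parallel to $n$.
\item It is $\tau_{ij}$-invariant, as the lemma predicts.
\end{itemize}
But for $n=e_1$ the map $(\sigma_n,R_n)$ sends $u_1\mapsto 4\rho-u_1$ and $z_1\mapsto -z_1$, and fixes $u_2,z_2$. Hence
\[
M(\sigma_nq,R_np)=4\rho z_2-M(q,p)\neq M(q,p).
\]
On $P_n$ one gets $M(q,p)-M(q,R_np)=-2u_2z_1$, which vanishes on $A_n$ but not on $P_n$. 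The translation invariance you extract in Step~3 also fails: $M$ changes by $2(c_1z_2-c_2z_1)$ under $q_i\mapsto q_i+c$, $q_j\mapsto q_j-c$.

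So your dimension count is detecting a real obstruction, not a technicality. Replacing the sphere by its tangent hyperplanes discards the rotational structure that makes the lemma true. No sliding of the contact point within $A_n$ can recover it. Any proof of Step~2 would need information beyond free flight and the $ij$ collision law, which is not available at this stage.

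The missing idea is to use the curvature globally rather than locally. The paper sets $x=(q_i-q_j)/\sqrt2$ and $v=(p_i-p_j)/\sqrt2$, with orthogonal complements $y,w$. Global free flight plus the reflection law on all of $\|x\|=a$ then make $f$ constant along a formal billiard. In it, $x$ bounces in the disc cut out of $\{\|x\|\le a\}$ by the plane of $x$ and $v$, while $y$ drifts as $y+tw$.
\begin{itemize}
\item Periodic disc orbits are dense, and a polynomial with a nonzero period is constant along that direction. This gives $f(x,y+sw,v,w)=f(x,y,v,w)$.
\item The orbits that carry $(x,v)$ to $(-x,-v)$ after some time $T$ are also dense. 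They give $f(x,y,v,w)=f(-x,y,-v,w)$, which is exactly $\tau_{ij}$.
\end{itemize}
The polynomial $M$ respects this symmetry, whereas it breaks the tangent-plane reflections $\sigma_n$.
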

\begin{proof}
Set:
\[
\mathcal C_{ij}
=
\left\{
q\in\R^{dN}:\|q_i-q_j\|=2\rho
\right\}.
\]
Since \(ij\) is an edge of \(\Gamma_\gamma\),
\[
\mathcal C_{ij}\cap\overline{\Omega}_\gamma
\]
contains a nonempty relatively open subset of \(\mathcal C_{ij}\). For \(q\in\mathcal C_{ij}\), set:
\[
n=\frac{q_i-q_j}{2\rho}.
\]
On this subset the collision law gives:
\[
\begin{aligned}
p_i'&=p_i-\langle p_i-p_j,n\rangle n,\\
p_j'&=p_j+\langle p_i-p_j,n\rangle n,
\end{aligned}
\]
with all other momenta unchanged; hence:
\[
f(q,p')=f(q,p)
\]
on a nonempty relatively open subset of
\(\mathcal C_{ij}\times\R^{dN}\). Since both sides are polynomial and
\(\mathcal C_{ij}\) is connected, it 
follows that:
\[
f(q,p')=f(q,p)\quad \text{on the whole $\mathcal C_{ij}$.}
\]
Introduce the coordinates:
\[
x=\frac{q_i-q_j}{\sqrt2},
\qquad
v=\frac{p_i-p_j}{\sqrt2},
\]
and complete them to orthogonal 
coordinates:
\[
(x,y)\in\R^d\times\R^{dN-d},
\qquad
(v,w)\in\R^d\times\R^{dN-d}.
\]
Then the collision surface is \(\|x\|=a\), \(a = \sqrt{2} \rho\), and the collision law becomes:
\[
v\longmapsto
r_xv
=
v-2\frac{\langle v,x\rangle}{a^2}x.
\]
Thus:
\begin{equation}\label{eq:ball-reflection}
f(x,y,v,w)=f(x,y,r_xv,w),
\qquad \|x\|=a.
\end{equation}
The global free-flight equation 
allows us to consider the 
following formal billiard. The variable \(x\) moves freely inside the 
ball \(\|x\| \le a\), 
and is reflected at the sphere 
\(\|x\|=a\). The remaining position 
variables move freely:
\[
y(t)=y+tw,
\]
while \(w\) remains fixed. This need not be an actual trajectory of the
hard-ball system: other balls may overlap or leave the box. Nevertheless, \(f\) is constant along it. 
\smallskip%

Consider an initial state \((x,v)\) 
with \(\|x\|< a, v\neq 0\). Suppose that \(\|x\|<a\) and \(x,v\) are not 
collinear; then the \(x\)-motion stays in the two-dimensional plane spanned by \(x\) and
\(v\), and is an ordinary 
disc billiard. For a dense set of states \((x,v)\) the billiard trajectory 
is periodic. For some 
\(T = T(x,v) > 0\),
the trajectory returns to \((x,v)\) 
after time \(T\). Consequently,
\[
 f(x,y,v,w)
=
f(x,y+Tw,v,w).
\]
For fixed \(x,v,w\), the function \(y\mapsto f(x,y,v,w)\) is polynomial. If \(w\ne0\), the polynomial \(y\to f(x,y,v,w)\) has a nonzero
period; hence:
\begin{equation}\label{eq:period}
f(x,y+sw,v,w)=f(x,y,v,w) \quad s\in\R.
\end{equation}
Since periodic states are dense and \(f\) is polynomial, this identity
holds for all \(x,v,y,w\).
\smallskip%

Take a state \((x,v)\) such that the 
trajectory hits \(-x, -v\) after 
some time \(T\). Then:
\[
f(x,y,v,w) = f(-x,y + T w,-v,w).
\]
From \eqref{eq:period}: 
\[
f(x,y,v,w) = f(-x,y,-v,w).
\]
Such states are dense, so the identity holds everywhere. Finally,
\[
(x,y,v,w)\longmapsto(-x,y,-v,w)
\]
is precisely the simultaneous exchange of balls \(i\) and \(j\). This completes the proof. \qed
\end{proof}

\begin{lemma}[Wall propagation]\label{lem:wall-propagation}
Suppose that \(ij\) is an edge of \(\Gamma_\gamma\). If
\[
f(\sigma_H^{(i)}q,R_H^{(i)}p)=f(q,p),
\]
then:
\[
f(\sigma_H^{(j)}q,R_H^{(j)}p)=f(q,p).
\]
\end{lemma}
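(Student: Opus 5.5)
Conjugation by the exchange symmetry from Lemma~\ref{lem:exchange} should give the claim directly. Write \(S_i(q,p)=(\sigma_H^{(i)}q,R_H^{(i)}p)\) and \(T_{ij}(q,p)=(\tau_{ij}q,\tau_{ij}p)\), viewed as affine maps of \(\R^{dN}\times\R^{dN}\). The hypothesis is that \(f\) is \(S_i\)-invariant: \(f\circ S_i=f\). Since \(ij\) is an edge of \(\Gamma_\gamma\), Lemma~\ref{lem:exchange} gives \(f\circ T_{ij}=f\). Both identities hold globally, because \(f\) denotes the polynomial continuation. This is exactly what allows us to compose the symmetries freely, without any concern about whether the intermediate configurations lie in \(\Omega_\gamma\).

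The key step is the conjugation identity
\[
T_{ij}\circ S_i\circ T_{ij}=S_j .
\]
To verify it, fix \((q,p)\). Applying \(T_{ij}\) moves \(q_j,p_j\) into slot \(i\) and \(q_i,p_i\) into slot \(j\). Next, \(S_i\) replaces the entry in slot \(i\) by \(\sigma_H q_j\) and \(R_H p_j\). Applying \(T_{ij}\) again returns \(q_i,p_i\) to slot \(i\) and puts \(\sigma_Hq_j,R_Hp_j\) into slot \(j\). The other slots are unchanged throughout. The result is \(S_j(q,p)\). Since \(T_{ij}\) is an involution, we conclude
\[
f\circ S_j=f\circ T_{ij}\circ S_i\circ T_{ij}=f\circ S_i\circ T_{ij}=f\circ T_{ij}=f,
\]
using in turn the invariance of \(f\) under \(T_{ij}\), then \(S_i\), then \(T_{ij}\) again. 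This is the assertion of the lemma.

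I do not expect a real obstacle here; the substance lies in Lemma~\ref{lem:exchange}. The one point to state carefully is that \(\sigma_H\) acts on a single slot through the same map \(\sigma_H\) of \(\R^d\) whichever ball occupies that slot. Consequently the conjugation only relabels the slot, and it needs no geometric input such as ball \(j\) actually being able to reach the wall \(H\).
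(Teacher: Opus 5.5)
Your proof is correct and is the paper's argument: conjugate the wall reflection of ball \(i\) by the exchange \(\tau_{ij}\) from Lemma~\ref{lem:exchange}. Both invariances hold globally for the polynomial continuation, and \(T_{ij}\circ S_i\circ T_{ij}=S_j\), which is the paper's ``exchange, reflect, exchange back.''
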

\begin{proof}
Take any \((q,p)\). First 
exchange the \(i\)-th and \(j\)-th balls:
\[
(q', p')
=
(\tau_{ij}q,\tau_{ij}p).
\]
By Lemma~\ref{lem:exchange},
\[
f(q', p')=f(q,p).
\]
Now reflect the \(i\)-th ball at the wall \(H\):
\[
f(\sigma_H^{(i)} q',
  R_H^{(i)} p')
=
f(q', p').
\]
Finally, exchange the \(i\)-th and \(j\)-th balls back. By
Lemma~\ref{lem:exchange}, this does not change \(f\). This completes the proof. \qed
\end{proof}

\begin{lemma}[Global symmetries]\label{thm:global}
Suppose that the collision graph \(\Gamma_\gamma\) is connected; then:
\[
f(\pi q,\pi p)=f(q,p)
\]
for every permutation \(\pi\) of the \(N\) balls. Moreover, for each wall \(H\) of the box and each \(i=1,\ldots,N\),
\[
f(\sigma_H^{(i)}q,R_H^{(i)}p)=f(q,p),
\qquad q,p\in\R^{dN}.
\]
\end{lemma}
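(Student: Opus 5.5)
The plan is to combine Lemma~\ref{lem:exchange} with the standard fact that transpositions along the edges of a connected graph generate the full symmetric group. Then the wall symmetries are spread from one ball to all balls, using Lemma~\ref{lem:ball-wall} as the seed and Lemma~\ref{lem:wall-propagation} (or the permutation invariance just obtained) to propagate.

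First, permutation invariance. The set of permutations \(\pi\) with \(f(\pi q,\pi p)=f(q,p)\) for all \((q,p)\) is a subgroup of \(S_N\). Indeed, if \(\pi_1,\pi_2\) preserve \(f\), then
\[
f(\pi_1\pi_2 q,\pi_1\pi_2 p)=f(\pi_2 q,\pi_2 p)=f(q,p).
\]
By Lemma~\ref{lem:exchange}, this subgroup contains \(\tau_{ij}\) for every edge \(ij\) of \(\Gamma_\gamma\). Transpositions along the edges of a connected graph on \(\{1,\ldots,N\}\) generate \(S_N\). To see this, take any \(a,b\) and a path \(a=v_0,v_1,\ldots,v_m=b\) in \(\Gamma_\gamma\). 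Conjugating \(\tau_{v_{m-1}v_m}\) successively by \(\tau_{v_{m-2}v_{m-1}},\ldots,\tau_{v_0v_1}\) produces \(\tau_{ab}\). Every permutation is a product of transpositions, so the first claim follows.

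Second, the wall symmetries. Fix a wall \(H\) of the box. I first need a single ball \(i_0\) that can actually collide with \(H\), so that Lemma~\ref{lem:ball-wall} applies. I expect this to be the only genuinely geometric point, and hence the main obstacle. My first attempt: take a configuration in \(\Omega_\gamma\) and choose the ball whose center has extreme \(k\)-th coordinate in the direction of \(H\) (say the ball with maximal \(q_{i,k}\) if \(H=\{x_k=L_k-\rho\}\)). Translate this ball in the \(+e_k\) direction until it hits \(H\). It can never meet another ball on the way, since every other center lies weakly below it in coordinate \(k\); ties can be broken by a small prior perturbation inside the open set \(\Omega_\gamma\). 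It stays inside the other walls, since only coordinate \(k\) changes. The resulting configuration lies in \(\overline{\Omega}_\gamma\) and touches \(H\) with no other contact. Small perturbations along \(\mathcal H\) keep it in \(\overline{\Omega}_\gamma\), which gives the relatively open subset needed in the proof of Lemma~\ref{lem:ball-wall}. This yields
\[
f(\sigma_H^{(i_0)}q,R_H^{(i_0)}p)=f(q,p).
\]

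Finally, pass from \(i_0\) to every \(i\). I will simply conjugate by \(\tau_{i_0 i}\), which preserves \(f\) by the first part:
\[
\tau_{i_0i}\circ\sigma_H^{(i_0)}\circ\tau_{i_0i}=\sigma_H^{(i)},\qquad \tau_{i_0i}\circ R_H^{(i_0)}\circ\tau_{i_0i}=R_H^{(i)}.
\]
Equivalently, I can apply Lemma~\ref{lem:wall-propagation} repeatedly along a path from \(i_0\) to \(i\) in \(\Gamma_\gamma\). Doing this for each wall \(H\) of the box completes the proof.
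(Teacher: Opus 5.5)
Your proposal is correct and follows the paper's proof: edge transpositions generate all permutations, Lemma~\ref{lem:ball-wall} gives the wall symmetry for one ball that can reach \(H\), and permutation invariance (equivalently, Lemma~\ref{lem:wall-propagation} applied along a path) spreads it to every ball. Your only deviation is pushing just the extremal ball toward \($H$\) instead of translating all balls, which is a harmless and slightly more explicit way to produce the contact configuration; the tie-breaking perturbation is not even needed, since \(\|q_{i_0}+se_k-q_j\|\) is nondecreasing in \(s\ge0\) whenever \(q_{i_0,k}\ge q_{j,k}\).
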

\begin{proof}
By Lemma~\ref{lem:exchange}, \(f\) is invariant under the transposition
\(\tau_{ij}\) for every edge \(ij\) of \(\Gamma_\gamma\). Since
\(\Gamma_\gamma\) is connected, these transpositions generate all
permutations of the vertices. Hence
\[
f(\pi q,\pi p)=f(q,p)
\]
for every permutation \(\pi\). Fix a wall \(H\). Translate 
all balls toward \(H\) until one 
ball reaches the wall. By Lemma~\ref{lem:ball-wall}, \(f\) has
the wall symmetry for this ball. By Lemma~\ref{lem:wall-propagation}, this symmetry propagates along the
edges of \(\Gamma_\gamma\). Since the graph is connected, it holds for
all balls. \qed
\end{proof}

\begin{proof}[Proof of 
Theorem \ref{thm:gas}] 
We now show that \(f\) is independent of the position. Fix a ball \(i\)
and a coordinate \(k\), and let
\[
H^-=\{x_k=\rho\},
\qquad
H^+=\{x_k=L_k-\rho\}.
\]
By Lemma~\ref{thm:global}, \(f\) is invariant under the corresponding
reflections of the \(i\)-th ball. Their composition leaves the momenta
unchanged and translates only the coordinate \(q_{i,k}\):
\[
q_{i,k} \to q_{i,k}+2(L_k-2\rho).
\]
Hence:
\[
f\bigl(q+2(L_k-2\rho)e_{i,k},p\bigr)=f(q,p),
\]
where \(e_{i,k}\) denotes the corresponding coordinate vector in
\(\R^{dN}\). Then
\[
q_{i,k} \to f(q,p)
\]
is a polynomial with a nonzero period; therefore, it is constant. Since \(i\) and \(k\) were arbitrary, 
\[
f(q,p)=Q(p)\quad 
\text{for some polynomial \(Q\).}
\]
The rest follows from Kozlov's argument for the thin-gas case; see Theorem~11 in \cite{Kozlov}. At this stage, that argument uses only that \(f\) is independent of the position and invariant under permutations of the balls. Both properties have now been proved, so Kozlov's argument applies without change and shows that \(Q\) is a polynomial in the total kinetic energy. The difficulty in the dense case is precisely to establish these two properties. \qed

\end{proof}
\bibliographystyle{plain}
\bibliography{ref}

\end{document}